\theoremstyle{plain}
\newtheorem*{theorem*}{Theorem}
\newtheorem*{lemma*} {Lemma}
\newtheorem*{corollary*} {Corollary}
\newtheorem*{proposition*} {Proposition}
\newtheorem{theorem}{Theorem}[section]
\newtheorem{lemma}[theorem]{Lemma}
\newtheorem{corollary}[theorem]{Corollary}
\newtheorem{proposition}[theorem]{Proposition}
\newtheorem{prelemma*}{Prelemma}
\theoremstyle{remark}
\newtheorem{notation}{Notation}
\newtheorem{example}{Example}
\theoremstyle{definition}
\newtheorem{conj}{Conjecture}
\def \Z {\mathbb{Z}}
\def \C {\mathbb{C}}
\def \P {\mathbb{P}}
\def \Qt{\widetilde{Q}}
\def \Qtl{\widetilde{Q}_\lambda}
\def \Pt{\widetilde{P}}
\def \Ptl{\widetilde{P}_{\lambda}}
\def \Rn{R(n)}
\def \Dn{\mathcal{D}(n)}
\def \bn{\begin{enumerate}}
\def \en{\end{enumerate}}
\def \bdm{\begin{displaymath}}
\def \edm{\end{displaymath}}
\def \bp{\begin{proof}}
\def\ep{\end{proof}}
\def\U+e{(U^+)^e}
\def\trm{\hspace{0.08in}\textrm{for} \hspace{0.08in}}
\def\be{\begin{equation}}
\def\ee{\end{equation}}
\def\Fdot{ F_{\mbox{\boldmath{.}}}}
\def\og{OG(n)}
\def\lg{LG(n)}
\def\In{\mathcal{I}_n}
\def\dn{\mathcal{D}(n)}
\def\Rn{\mathcal{R}(n)}
\begin{document}

\title [Galkin's Lower bound Conjecure for $\lg$ and $\og$] {Galkin's Lower bound Conjecure for LAGRANGIAN AND ORTHOGONAL GRASSMANNIANS}
\author{Daewoong Cheon and Manwook Han}
\address{Chungbuk National University, Department of Mathematics,
Chungdae-ro 1, Seowon-Gu, Cheongju City, Chungbuk 28644, Korea}
\email{daewoongc@chungbuk.ac.kr}
\address{Chungbuk National University, Department of Mathematics,
Chungdae-ro 1, Seowon-Gu, Cheongju City, Chungbuk 28644, Korea}
\email{santamaria95@chungbuk.ac.kr}
\date{\today}
\subjclass[2010]{ 14N35, 05E15, 14J33, and 47N50 }

\begin{abstract}
Let $M$ be a Fano manifold, and $H^\star(M;\C)$ be the quantum cohomology ring of $M$ with the quantum product $\star.$ For  $\sigma \in H^*(M;\C)$, denote by $[\sigma]$  the quantum multiplication operator $\sigma\star$ on $H^*(M;\C)$. It was conjectured several years ago \cite{GGI, GI} 
 and has been proved for many  Fano manifols \cite{CL1, CH2, LiMiSh, Ke}, including our cases, that  the operator $[c_1(M)]$ has a real valued eigenvalue $\delta_0$ which is  maximal among eigenvaules of $[c_1(M)]$.
Galkin's lower bound conjecture \cite{Ga} states that for a Fano manifold $M,$ $\delta_0\geq \mathrm{dim} \ M +1,$ and the equlity holds if and only if $M$ is the projective space $\mathbb{P}^n.$ In this note, we show that Galkin's lower bound conjecture holds for Lagrangian and orthogonal Grassmannians, modulo some exceptions for the equality.
\end{abstract}
\maketitle

\section{INTRODUCTION}
Let $M$ be a Fano manifold. The quantum cohomology ring $H^\star(M,\C)$  is a complex vector space $H^*(M,C)$ together with the quantum multiplication $\star$, a deformation of the cup product of the ring $H^*(M,C)$.  For $\sigma\in H^\star(X,\C),$ define the quantum multiplication operator $[\sigma]$ on $H^\star(X,\C)$ by $[\sigma](\tau)=\sigma\star \tau$ for $\tau \in H^\star(X,\C)$. Galkin's lower bound conjecture (for short, GLBC) is related with and preceded by the so-called Property $\mathcal{O}$ which some manifolds may enjoy. So let us first address Property $\mathcal{O}$. See \cite{GGI, GI} for details on property $\mathcal{O}$. Let $\delta_0=\delta_0(M)$ be the absolute value of a maximal  eigenvalue of the operator $[c_1(M)]$, where $c_1(M)$ denotes the first Chern class of the tangent bundle of $M.$
A manifold  $M$ is said to have property $\mathcal{O}$ if  \bn
\item $\delta_0$ is an eigenvalue of $[c_1(M)]$.
\item The multiplicity of the eigenvalue $\delta_0$ is one.
\item If $\delta$ is an eigenvalue of  $[c_1(TM)]$ such that $|\delta|=\delta_0,$ then $\delta =\delta_0\xi$ for some $r$-th root of unity, where $r$ is the Fano index of $M.$
\en

GLBC concerns the real number $\delta_0$. Once a manifold $M$ has property $\mathcal{O},$ then $\delta_0$ would become an eigenvalue of $[c_1(M)]$. In fact, Galkin, Golyshev and Iritani conjectured that Fano manifolds have property $\mathcal{O}$ (\cite{GGI, GI}), and it turned out that this is the case for many  Fano manifolds  \cite{CL1, CH2, LiMiSh, Ke}.
In particular, we have

\begin{proposition}[\cite{CL1}] \label{homogeneous-O} General homogeneous manifolds $G/P$  have property $\mathcal{O}.$ 
\end{proposition}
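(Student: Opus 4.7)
The plan is to reduce Property $\mathcal{O}$ to a statement about a nonnegative matrix and then apply the Perron--Frobenius theorem, in the spirit of the Cheong--Li argument. First I would fix the Schubert basis $\{\sigma_w\}_{w\in W^P}$ of $H^\star(G/P,\C)$ and write down the matrix $A$ of the operator $[c_1(G/P)]$ in this basis. By the quantum Chevalley formula, the entries of $A$ are nonnegative integers (classical Chevalley coefficients together with three-point, degree-one Gromov--Witten invariants times powers of the Novikov variable, which we specialize to $1$ after distributing by the $\Z/r\Z$-grading, where $r$ is the Fano index). This immediately puts us in the realm where Perron--Frobenius type statements can be invoked.

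The next step is to prove that $A$ is irreducible in the sense of nonnegative matrix theory, i.e.\ that for every ordered pair of Schubert classes $(\sigma_u,\sigma_v)$ some power $A^N$ has a nonzero entry connecting them. I would do this by exhibiting, for each $w\in W^P$, a sequence of quantum Chevalley moves starting at $\sigma_w$ whose endpoints exhaust $W^P$; the existence of such paths follows from the fact that $(G/P,\star)$ is $\Z/r\Z$-graded with only one class in the top degree and from the well-known fact that the quantum cohomology of a homogeneous space is generically semisimple, so $A$ cannot be block-decomposable into invariant subspaces of the Schubert lattice. This irreducibility then yields, via Perron--Frobenius, a simple positive real eigenvalue $\delta_0$ equal to the spectral radius, which settles conditions (1) and (2) in the definition of Property $\mathcal{O}$.

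For condition (3), I would exploit the $\Z/r\Z$-grading of $qH^\star(G/P)$ by $c_1$-degree modulo $r$. Relative to the Schubert basis, $A$ sends the degree-$k$ part to the degree-$(k+1)$ part (mod $r$), so $A$ is cyclic of period $r$ in the Perron--Frobenius sense. The Frobenius theorem on irreducible nonnegative matrices of period $r$ then guarantees that the peripheral spectrum of $A$ consists exactly of the $r$ numbers $\delta_0 \xi$ with $\xi^r=1$, each with multiplicity one. This yields property $\mathcal{O}$ in the form stated in the introduction.

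The main obstacle in carrying this plan out for arbitrary $G/P$ is the irreducibility step: it is combinatorial rather than formal, since one must verify that quantum Chevalley multiplication, rather than only classical multiplication, genuinely mixes all Schubert classes. In the Lagrangian and orthogonal cases treated in the rest of the paper this is handled by the explicit quantum Pieri/Chevalley formulas for $\lg$ and $\og$, which will be precisely the input we need to quote Proposition~\ref{homogeneous-O} in what follows.
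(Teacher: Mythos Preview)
The paper does not actually prove this proposition; it is simply quoted from \cite{CL1}, so there is no ``paper's own proof'' to compare against beyond the reference. Your overall strategy---write $[c_1(G/P)]$ in the Schubert basis, observe via the quantum Chevalley formula that the resulting matrix $A$ is nonnegative, and apply the Perron--Frobenius theorem, with the $\Z/r\Z$-grading handling condition~(3)---is exactly the approach taken in \cite{CL1}.

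There is, however, a genuine gap in your irreducibility argument. You justify irreducibility of $A$ by invoking generic semisimplicity of $qH^*(G/P)$, but (i) semisimplicity of quantum cohomology is \emph{not} known for arbitrary $G/P$ (it remains open outside special cases such as minuscule and cominuscule spaces), and (ii) even when the ring is semisimple, this is a statement about diagonalizability of multiplication operators and says nothing about whether the \emph{nonnegative} matrix $A$ in the Schubert basis is irreducible in the Perron--Frobenius sense---a diagonal matrix with distinct positive entries is trivially semisimple yet reducible. The actual argument in \cite{CL1} establishes irreducibility directly and combinatorially from the quantum Chevalley formula of Fulton--Woodward: one shows that the directed graph on $W^P$ whose edges are the nonzero entries of $A$ is strongly connected, essentially because classical Chevalley moves raise degree while the quantum correction terms allow one to return from high degree back down. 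You correctly flag this step as the crux, but the justification you propose does not supply it.
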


 Let $\lg$ (resp. $\og$) denote Lagrangian (resp. orthogonal) Grassmannian (see below for the precise definitions). Then by  Proposition \ref{homogeneous-O}, the $\delta_0$ is an eigenvalue of $[c_1(M)]$ for $\lg$ and $ \og$. A direct proof of this also was given in  \cite{CH2}.  

\smallskip
Now we  state Galkin's lower bound conjecture. 
\begin{conj}[Galkin's lower bound conjecture, \cite{Ga}] For a Fano manifold $M,$ we have $$\delta_0\geq \mathrm{dim} \ M +1,$$ and the equlity holds if and only if $M$ is the projective space $\mathbb{P}^n.$
\end{conj}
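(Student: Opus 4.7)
The plan is to exploit the explicit quantum-cohomological description of $\lg$ and $\og$: once the matrix of $[c_1(M)]$ is written in the Schubert basis, the maximal eigenvalue $\delta_0$ becomes concrete enough to compare directly with $\dim M + 1$.

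First I would use the fact that for each of the two families $c_1(M)$ is a positive integer multiple of the Schubert divisor class $\sigma_1$ (the Fano index is $n+1$ for $\lg$, and $2n-1$ or $2n$ for the two orthogonal families), and invoke the quantum Chevalley/Pieri formula of Kresch--Tamvakis (and its type $B$/$D$ analogue) to write $[c_1(M)]$ as an explicit non-negative integer matrix $A$ in the Schubert basis. By Proposition~\ref{homogeneous-O} and Perron--Frobenius, $\delta_0$ equals the spectral radius of $A$ and is realised by a strictly positive eigenvector; in particular it can be bounded below by $\min_i (Av)_i/v_i$ for any strictly positive test vector $v$.

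Second I would bring in the explicit description of the eigenvalues of $[c_1(M)]$ obtained in \cite{CH2}, which for $\lg$ and $\og$ expresses $\delta_0$ in closed form (typically as a positive combination of terms of the shape $\cos(\pi k/r)$ or as values of Schur-type polynomials at suitable roots of unity). Combined with the known formula $\dim \lg = n(n+1)/2$ and its analogues in types $B$ and $D$, the conjectured inequality $\delta_0 \geq \dim M + 1$ reduces to an elementary inequality in the single integer parameter $n$. I would try to prove that inequality by keeping only a few dominant terms in the closed-form expression and bounding the rest, which should leave a clean polynomial comparison in $n$.

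The main obstacle I anticipate is the fine-grained analysis needed to pin down the equality case together with the low-rank exceptions. Since GLBC predicts equality only for $\mathbb{P}^n$, one must establish strict inequality $\delta_0 > \dim M + 1$ for every $\lg$ and $\og$ outside the list flagged in the abstract; for large $n$ this should follow from a crude asymptotic bound and a monotonicity or induction argument, but the small-rank instances will have to be checked by direct numerical computation, and it is presumably precisely those checks that produce the exceptional equalities the abstract refers to.
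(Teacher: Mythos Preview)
Your proposal is on the right track and lands on essentially the same strategy the paper uses: invoke the closed-form eigenvalue formula from \cite{CH2} for $[c_1]$, reduce the inequality $\delta_0\ge \dim M+1$ to an elementary one-variable inequality in $n$, prove that inequality by calculus for $n$ large, and verify the remaining small $n$ (and hence the equality cases) by direct computation.

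Two points of comparison are worth making. First, your Perron--Frobenius/test-vector lower bound is superfluous here: once one has the exact expressions
\[
\delta_0(\lg)=2^{-\frac{1}{n+1}}(n+1)\bigl(\sin\tfrac{\pi}{2(n+1)}\bigr)^{-1},
\qquad
\delta_0(\og)=2^{\frac{1}{n}}\,n\,\bigl(\sin\tfrac{\pi}{2n}\bigr)^{-1},
\]
there is nothing left to estimate by test vectors; the paper works directly with these formulas. Second, where you speak loosely of ``keeping dominant terms,'' the paper carries this out concretely by substituting $x=\tfrac{1}{n+1}$ (resp.\ $x=\tfrac{1}{n}$), rewriting the desired inequality as $f(x)\ge 0$ (resp.\ $h(x)\ge 0$) for explicit elementary functions, and then using only $\sin y\le y$ together with crude bounds like $2^{x}\le 2^{1/3}$ to dominate by a polynomial whose sign is easy to check. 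Your anticipated endgame---an asymptotic argument for large $n$ plus direct checks for small $n$---matches the paper exactly, and indeed the hand checks at $n=1,2$ are what produce the exceptional equalities for $OG(1)$ and $OG(2)$. One small correction: the paper treats a single orthogonal family $OG(n)$ (the odd and even models being isomorphic), with $c_1=2n\,\tau_1$; there is no separate ``$2n-1$'' case to handle.
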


Very recently,  Galkin's lower bound conjecure was proved for the Grassmannian \cite{EvScShShWa}.
Motivated by this, we have investigated if  GLBC holds for $\lg$ and $\og,$ and have obtained the following.

\begin{theorem}\label{Main}
GLBC holds for $\lg$ and $\og$. To be precise, we have 
\bn
\item $\delta_0(\lg)\geq \mathrm{dim} \ \lg$,
\item $\delta_0(\og)\geq \mathrm{dim} \ \og$.
\en
 Furthermore,  the equality  holds  for $LG(1)$, $OG(1)$ and $OG(2).$ 
\end{theorem}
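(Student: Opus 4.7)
The plan is to leverage the explicit description of the spectrum of $[c_1(M)]$ for $M = \lg$ and $\og$ already developed during the verification of Property $\mathcal{O}$ for these spaces (\cite{CH2, CL1}), and then to perform a direct numerical comparison with the relevant dimension. Since Property $\mathcal{O}$ is known in these cases, $\delta_0$ is not only a real positive eigenvalue but admits a workable closed-form expression, arising as the eigenvalue at the distinguished Perron--Frobenius-type joint eigenvector of the Schubert operators (equivalently, the totally positive critical point of the Landau--Ginzburg mirror).

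First, I would write down the explicit formula for $\delta_0(\lg)$ and for $\delta_0(\og)$, treating the even and odd cases $OG^e(n)$, $OG^o(n)$ separately. Structurally, each such formula is the Fano index of $M$ multiplied by a positive sum of trigonometric or algebraic quantities indexed by a fundamental alcove. Since $\dim \lg = \dim \og = n(n+1)/2$, the theorem reduces to checking a completely explicit inequality between $\delta_0$ and $n(n+1)/2 + 1$ as a function of $n$. I would then split the comparison into a ``small $n$'' range, handled by direct computation, and a ``large $n$'' range, where elementary estimates such as $\cos x \geq 1 - x^2/2$ or $\sin x \geq 2x/\pi$ yield a clean monotone lower bound on $\delta_0$ exceeding $n(n+1)/2 + 1$. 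The equality cases $LG(1)$, $OG(1)$, and $OG(2)$ all arise from low-dimensional coincidences with projective space, where the bound becomes tight and reduces to the standard $\mathbb{P}^n$ calculation.

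The main obstacle will be to extract a sufficiently clean lower bound on $\delta_0$ to beat the quadratic $n(n+1)/2 + 1$ uniformly in $n$, while being tight enough to rule out unintended equality cases beyond the three projective-space exceptions. The explicit formulas are sums of cosines (or analogous algebraic expressions) whose polynomial-in-$n$ behavior is not transparent term by term, so some telescoping, convexity, or integral approximation will be required. A secondary complication is the parity split inside $\og$: $OG^e(n)$ and $OG^o(n)$ have distinct Fano indices and slightly different eigenvalue formulas, so the argument bifurcates into two parallel but not identical sub-arguments, each of which must be checked on its own and then combined to give the stated uniform result.
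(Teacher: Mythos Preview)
Your overall strategy---extract the closed-form eigenvalue from the Property $\mathcal{O}$ analysis, then split into a direct check for small $n$ and an elementary estimate for large $n$---is exactly what the paper does. Two points require correction, however.

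First, there is no parity bifurcation inside $\og$. As the paper records, $OG^o(n)$ and one component of $OG^e(n+1)$ are isomorphic varieties, and the paper treats the single space $OG(n):=OG^o(n)=OG^e(n+1)'$ with a single Fano index $2n$ and a single formula
\[
\delta_0(\og)=2^{1/n}\,n\,\bigl(\sin\tfrac{\pi}{2n}\bigr)^{-1},
\qquad
\delta_0(\lg)=2^{-1/(n+1)}\,(n+1)\,\bigl(\sin\tfrac{\pi}{2(n+1)}\bigr)^{-1}.
\]
So the ``two parallel but not identical sub-arguments'' you anticipate do not arise; there is one case for $\lg$ and one for $\og$, and neither $\delta_0$ is a sum over an alcove---each is already a single closed-form expression.

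Second, watch the direction of your trigonometric bound. Since $\delta_0$ involves $(\sin)^{-1}$, lower-bounding $\delta_0$ requires an \emph{upper} bound on $\sin$. The inequality $\sin x\ge 2x/\pi$ you mention yields only an upper bound on $\delta_0$ and is useless here. The paper instead uses $\sin y\le y$ (so $\delta_0\ge C\cdot \tfrac{2n}{\pi}$-type bounds), packages this into short calculus lemmas for the functions $f(x)=2x-2^x(2x^2-x+1)\sin\tfrac{\pi x}{2}$ and $h(x)=2^{x+1}x-(2x^2+x+1)\sin\tfrac{\pi x}{2}$ on $(0,\tfrac13]$ and $[0,\tfrac16]$ respectively, and then handles the residual small $n$ (only $n=1$ for $\lg$; $n\le 5$ for $\og$) by direct evaluation. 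With those adjustments your plan is the paper's proof.
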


Note that $LG(1)$ is none other than the projecive line $\mathbb{P}^1$, for which the equality in GLBC was verified in \cite{EvScShShWa}.  However, $OG(1)$ and $OG(2)$ are new examples for which the equality holds. 

  To prove the conjecture, as in \cite{EvScShShWa}, we use the explicit computation of $\delta_0$ made in \cite{CH2} together with some calculus.  But since the  behaviour of $\delta_0$ (depending on $n$) is quite different from  that of the Grassmannian,
we take a diffenent calculus apprach to show Theorem \ref{Main}. At the last part of the article we also make a dirct check on the equlity in GLBC for $OG(1)$ and $OG(2)$, using the quantum Pieri rule for $\og$.

\subsection*{Acknowledgements} The first author was supported by Basic Science Research Programs through the National Research Foundation of Korea (NRF) funded by the Ministry of Education (NRF-2018R1D1A3B07045594).
\section{Preliminaries}
In this section, we review $\Qt$- and $\Pt$-polynomials of Pragacz
and Ratajski which represent cohomology classes of Lagrangian and orthogonal Grassmannians, respectively. The readers refer  to  \cite{Pra and Rat 1} and \cite{Las and Pra} for details. Let us begin with some combinatorial notations.
\subsection{Notations} Fix a positive integer $n$. A \textit{partition} $\lambda$ is a weakly decreasing sequence of nonnegative integers $\lambda = (\lambda_1 , \lambda_2 , \ldots , \lambda_n)$. The nonzero $\lambda_i$ are called the \textit{parts} of $\lambda$. The number of parts is called the \textit{length} of $\lambda$ and is denoted $l(\lambda)$. The sum $\sum_{i=1}^n \lambda_i$ is called the \textit{weight} of $\lambda$, and denoted $|\lambda|$. Let $\Rn$ be the set of all partitions $(\lambda_1 , \ldots , \lambda_n)$ such that $\lambda_1 \leq n$. 
A partition $\lambda$ of length $l$ is called \textit{strict} if $\lambda_1 > \cdots >\lambda_l$. Let $\Dn\subset \Rn$ be the set of all strict partitions $(\lambda_1, \ldots ,\lambda_n)$ such that $\lambda_1 \leq n$. We usually write $(\lambda_1, \ldots ,\lambda_l)$ for a (strict) partition $\lambda = (\lambda_1, \ldots , \lambda_l, 0, \ldots , 0)$ of length $l$, if no confusion should arise.
 For $\lambda \in \mathcal {D}(n)$, 
$\widehat{\lambda}$ denotes the partition whose parts complements the parts
of $\lambda$ in the set $\{1,...,n\}$.
To a partition $\lambda$, we associate a Young diagram $T_\lambda$
of boxes (see \cite{F}). Write $\lambda\supset \mu$ if  $T_\lambda\supset T_\mu$. For $\lambda, \mu$ with
$\lambda \supset \mu$, $\lambda/\mu$ denotes the \textit{skew diagram }
$T_\lambda\setminus T_\mu$ defined by  the set-theoretic difference. A skew diagram $\alpha$ is
a \textit{horizontal strip} if it has at most one box in each column. Two boxes in $\alpha$ are
\textit{connected} if they share a vertex or an edge; this defines the connected components of $\alpha.$

\subsection{$\Qt$- and $\Pt$-polynomials}
Let $X = (x_1, \ldots , x_n)$ be an $n$-tuple of variables. For $i=1, \ldots , n$, let  $E_i (X)$ be the $i$-th  elementary symmetric polynomial in $X$.
The $\Qt$-polynomials of Pragacz and Ratajski are indexed by the elements of $\Rn$. For $i \geq j$, define
\begin{displaymath}
\Qt_{i,j}(X) \ = \ E_i (X) E_j (X) + 2 \sum_{k=1}^{j}(-1)^k E_{i+k} (X) E_{j-k} (X) .
\end{displaymath}
For any partition $\lambda$, not necessarily strict, and for $r = 2 \left\lfloor (l(\lambda)+1)/2 \right\rfloor$, let $B_\lambda$ be the $r \times r$ skewsymmetric matrix whose $(i, j)$-th entry is given by $\Qt_{\lambda_i , \lambda_j} (X)$ for $i < j$. The $\Qt$-polynomial associated to $\lambda$ is defined by
\[ \displaystyle \Qt_{\lambda}(X) \ = \ \textrm{Pfaffian} (B_{\lambda}) . \]
 $\Pt$-polynomials are defined as follows.
Given $\lambda$, not necessarily strict,  define
$$\Ptl(X):=2^{-l(\lambda)}\Qtl(X).$$
Note that from the definitions, for $\lambda = (k)$ with $0 \leq k \leq n$ we have $\Qt_{(k)}(X) = E_k (X)=2 \Pt_{(k)}$. We often write $\Qt_k(X)$ and $\Pt_k(X)$ for $\Qt_{(k)}(X)$ and $\Pt_{(k)}(X)$, respectively.

\section{Quantumtum cohomology}
This section is devoted to giving a desctription of the quantum cohomology rins of Lagrangian and orthogonal Grassmannians. The readers can refer to \cite{KT2, KT1} and references therein for details.
\subsection{Lagrangian and orthogonal Grassmannians} Let $E=\C^{2n}$ be a complex vector space.  Fix a symplectic form, i.e., a nondegenerate skew symmetric bilinear form $Q$ on $E$.  A subspace
$\Sigma \subset E$ is called $isotropic$ if $Q(v,w)=0$ for all $v,w \in
\Sigma.$ By a linear algebra,  a  maximal isotropic subspace of $E$, called a Lagrangian subspace, is has a complex dimension $n$. Let $\lg$ be the parameter space of Lagrangian subspaces in $E$. Then $\lg$ is a homogeneous manifold
$Sp_{2n}(\C)/P_n$ of (complex) dimension $n(n+1)/2$, where $P_n$ is
the maximal parabolic subgroup of the symplectic group $Sp_{2n}(\C)$
associated with the `right end root' in the Dynkin Diagram of Lie
type $C_n$,  e.g., on Page $58$ of \cite{Hu1}.

To define an orthogonal Grassmannian, fix  a symmetric bilinear form $Q$ on $E=\C^N$.  A maximal isotropic subspace of  has complex dimension $\lfloor\frac{N}{2}\rfloor$. For a positive integer $n$, let $OG^o(n)$ (resp. $OG^{e}(n+1)$) be the parameter space of maximal isotropic subspaces in $E=\C^{2n+1}$ (resp. $E=\C^{2n+2}$).  Then the odd orthogonal Grassmannian $OG^o(n)$ is a homogeneous variety $SO_{2n+1}(\C)/P_{n}$ of
dimension $n(n+1)/2$, where $P_{n}$ is the maximal parabolic
subgroup of $SO_{2n+1}(\C)$ associated with a `right end root' of
the Dynkin diagram of type $B_n$.
The variety $OG^e(n+1)$ has two isomorphic components. Fix one component  of $OG^e(n+1)$ for which we write $OG^e(n+1)^\prime$. Then  the {\it even orthogonal Grassmannian} $OG^e(n+1)^\prime$ is
a homogeneous variety  $SO_{2n+2}(\C)/P_{n+1}$, where $P_{n+1}$
is the maximal parabolic subgroup of $SO_{2n+2}(\C)$ associated with
the `right end root' of the Dynkin diagram of Lie type $D_{n+1}$. It is well-known that two varieties $OG^o(n)$ and  $OG^e(n+1)^\prime$ are isomorphic to each other. In what follows, we will identify
these two varieties, write $$OG(n):= OG^o(n)=OG^e(n+1)^\prime.$$

\subsection{Quantum cohomology of $LG(n)$} The (quantum) cohomology ring of a homogeneous manifold $G/P$ can be best described in terms of  the Schubert classes, which are, in turn, given by the Schubert varieties of $G/P.$ For $\lg$, the Schubert varieties are defined as follows.  Let $E=\C^{2n}$ be a vector space with a symplectic form $Q$.
Fix a
complete isotropic flag $ F_{\mbox{\boldmath{.}}}$ of subspaces
$F_i$ of $E$:
$$\Fdot : 0=F_0\subset F_1\subset \cdots \subset F_n\subset E,$$
where dim$(F_i)=i$  for each $i,$ and $F_n$ is Lagrangian. For 
$\lambda \in \Dn,$ the Schubert variety $X_{\lambda}(\Fdot)$ associated with $\lambda$ is defined as 
\be \label{schubert} X_{\lambda}(\Fdot)= \big\{\Sigma \in \lg \ | \ \textrm{dim}(\Sigma \cap
F_{n+1-\lambda_i})\geq i \trm i=1,...,l(\lambda)\big \}.\ee Then
$X_{\lambda}(\Fdot)$ is a subvariety of $\lg$ of complex codimension
$| \lambda |.$ The Schubert class associated with $\lambda$ is
defined to be the cohomology class, denoted by $\sigma_{\lambda}$,
Poincar\'{e} dual to the homology class $[X_{\lambda}(\Fdot)]$, so
 $\sigma_\lambda \in H^{2|\lambda|}(\lg,\Z).$ It is a classical
result that
$\{\sigma_{\lambda}\hspace{0.05in}|\hspace{0.05in}\lambda\in \dn\}$
forms an additive basis for $H^{*}(\lg,\Z)$. It is conventional to
write $\sigma_i$ for $\sigma_{(i)}$. 

A rational map of degree $d$ to $\lg$ is a morphism
$f:\mathbb{P}^1\rightarrow \lg$ such that
$$\int_{LG(n)}f_*[\P^1]\cdot \sigma_1=d.$$ Given an integer $d\geq0$
and partitions $\lambda, \mu,$ $\nu\in \mathcal{D}(n)$, the
Gromov-Witten invariant
$<\sigma_{\lambda},\sigma_{\mu},\sigma_{\nu}>_d$  is defined as the
number of rational maps $f:\mathbb{P}^1\rightarrow \lg$ of degree
$d$ such that $f(0)\in X_{\lambda}(\Fdot),$ $f(1)\in X_{\mu}(
G_{\mbox{\boldmath{.}}}),$ and $f(\infty)\in X_{\nu}(
H_{\mbox{\boldmath{.}}})$, for given isotropic flags $\Fdot$,
$G_{\mbox{\boldmath{.}}},$ and $H_{\mbox{\boldmath{.}}}$ in general
position. We remark that
$<\sigma_{\lambda},\sigma_{\mu},\sigma_{\nu}>_d$ is $0$ unless
$|\lambda|+|\mu|+|\nu|=\mathrm{dim}(LG(n))+(n+1)d$. The quantum
cohomology ring $qH^{*}(\lg,\Z)$ is isomorphic
 to $H^{*}(\lg,\Z)\otimes \Z[q]$
as  $\Z[q]$-modules, where $q$ is a formal variable of degree
$(n+1)$ and called the $quantum$ $variable$. The multiplication in
$qH^{*}(\lg,Z)$ is given by the relation \be~\label{multi}
\sigma_\lambda \star
\sigma_\mu=\sum<\sigma_\lambda,\sigma_\mu,\sigma_{\hat{\nu}}>_d
\sigma_\nu q^d,\ee  where the sum is taken over $d\geq0$ and
partitions $\nu$ with $|\nu|=|\lambda|+|\mu|-(n+1)d.$\\

Now we a give a presentation of the
ring $qH^*(\lg,\Z)$ and the quantum Pieri rule, both due to Kresch and
Tamvakis \cite{KT2}.
\begin {theorem}[\cite{KT2}] \label{quantum coho:lg}
 The quantum cohomology ring
$qH^*(\lg,\Z)$ is presented as a quotient of the polynomial ring
$\Z[\sigma_{1},...,\sigma_n, q]$ by the relations
$$\sigma_i^2+2\sum_{k=1}^{n-i}(-1)^k\sigma_{i+k}\star\sigma_{i-k}=(-1)^{n-i}\sigma_{2i-n-1}q$$
for $1\leq i \leq n.$ 
\end{theorem}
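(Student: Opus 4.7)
The plan is to deduce the quantum ring presentation from the classical cohomology presentation of $\lg$ by a Siebert--Tian-type deformation argument, with the quantum correction $(-1)^{n-i}\sigma_{2i-n-1}q$ extracted from degree-one Gromov--Witten invariants.

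First, I would recall the classical picture. On $\lg$ the tautological Lagrangian subbundle $S$ satisfies $S^\vee\cong E/S$ via the symplectic form, and with $\sigma_i=c_i(S^\vee)$ the Whitney identity $c(S^\vee)c(S)=1$ yields
\[
r_i:=\sigma_i^2+2\sum_{k=1}^{n-i}(-1)^k\sigma_{i+k}\sigma_{i-k}=0,\qquad 1\le i\le n,
\]
in $H^*(\lg,\Z)$. A Hilbert-series comparison against the Schubert-basis count $|\dn|=2^n$ confirms that $r_1,\ldots,r_n$ is a regular sequence and that $H^*(\lg,\Z)\cong\Z[\sigma_1,\ldots,\sigma_n]/(r_1,\ldots,r_n)$.

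Second, I would verify that $R_i:=r_i-(-1)^{n-i}\sigma_{2i-n-1}q$ vanishes in $qH^*(\lg,\Z)$. The purely classical part is $r_i=0$, and by~\eqref{multi} the degree constraint $|\lambda|+|\mu|=|\nu|+(n+1)d$ forces any quantum correction to $\sigma_i^2+2\sum_k(-1)^k\sigma_{i+k}\star\sigma_{i-k}$ to come only from $d=1$, with targets $\sigma_\nu$ of weight $|\nu|=2i-n-1$. I would evaluate the resulting degree-one Gromov--Witten invariants via the quantum-to-classical principle: lines on $\lg$ meeting three general Schubert varieties are parametrised by classical intersections on an auxiliary sub-maximal isotropic flag variety, so each $\langle\sigma_\lambda,\sigma_\mu,\sigma_\nu\rangle_1$ becomes an ordinary intersection number. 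The task reduces to a combinatorial identity showing that the full alternating sum annihilates every $\sigma_\nu$ except $\nu=(2i-n-1)$, on which the coefficient is precisely $(-1)^{n-i}$.

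Third, having produced relations $R_1,\ldots,R_n$ that genuinely hold in $qH^*(\lg,\Z)$, I would invoke Siebert--Tian. The canonical $\Z[q]$-algebra surjection
\[
\Z[\sigma_1,\ldots,\sigma_n,q]/(R_1,\ldots,R_n)\longrightarrow qH^*(\lg,\Z)
\]
is a map of graded free $\Z[q]$-modules: the leading classical part of each $R_i$ is $r_i$, so setting $q=0$ recovers $H^*(\lg,\Z)$ and the source therefore has the same graded Hilbert series as the target $H^*(\lg,\Z)\otimes_\Z\Z[q]$. A surjection between free $\Z[q]$-modules of equal finite rank in each graded piece is an isomorphism.

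The hard part will be step two. Even though only the single class $\sigma_{2i-n-1}$ survives on the right, the invariants $\langle\sigma_{i+k},\sigma_{i-k},\sigma_{\hat\nu}\rangle_1$ entering the alternating sum are individually nonzero for many $k$ and $\nu$, and the precise cancellation producing exactly $(-1)^{n-i}\sigma_{2i-n-1}$ is essentially equivalent to a special case of the quantum Pieri rule on $\lg$. Either the quantum-to-classical parametrisation of lines, or a direct Schubert-geometric calculation on the $\P^1$-bundle of lines over $\lg$, will be the necessary technical core.
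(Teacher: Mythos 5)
Theorem \ref{quantum coho:lg} is quoted from Kresch and Tamvakis \cite{KT2}; the present paper offers no proof of it, so there is nothing in-text to compare your argument against. Your outline is nevertheless a faithful sketch of how such a presentation is established. Steps one and three are sound: the Whitney identity for the tautological Lagrangian subbundle $S$, together with $S^\vee \cong E/S$, gives the classical relations $r_i = 0$ and the Hilbert-series count identifies $H^*(\lg,\Z)$ with the quotient; the degree count $|\nu| = 2i - (n+1)d$ correctly confines quantum corrections to $d=1$; and the Siebert--Tian comparison of graded free $\Z[q]$-modules is the right mechanism for upgrading the canonical surjection to an isomorphism once corrected relations are exhibited. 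But you explicitly leave step two --- the evaluation of the degree-one Gromov--Witten invariants and the cancellation down to the single surviving term $(-1)^{n-i}\sigma_{2i-n-1}q$ --- as an unfinished task, and that is precisely where the substance of \cite{KT2} lies. In that reference the degree-one invariants on $\lg$ are reduced via a quantum-to-classical correspondence to classical intersection numbers on an isotropic Grassmannian of non-maximal type, the quantum Pieri rule is proved from this, and the ring presentation is then deduced; your plan inverts that chain but would still have to reprove the same computational core. As written, the proposal is a correct strategy rather than a proof.
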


\begin{notation} For partitions $\lambda, \mu\in \Rn$ with $\mu \supset \lambda$, let $N(\lambda,\mu)$ denote the number of connected components of $\mu/\lambda$ not meeting the first column, and let $N^\prime(\lambda, \mu)$ be one less than the number of connected components of $\mu/\lambda.$
\end{notation} 

\begin{proposition} [Quantum Pieri Rule for $\lg$, \cite{KT2}] For any $\lambda \in \mathcal{D}(n)$ and $k>0$, we have
\begin{displaymath}
\sigma_k \star \sigma_\lambda= \sum_{\mu}2^{N(\lambda,\mu)}\sigma_\mu+\sum_{\nu }2^{N^\prime(\nu,\lambda)} \sigma_{\nu}q,
\end{displaymath}
where 
the first sum is over all partitions $\mu\supset \lambda$ with $|\mu|=|\lambda|+k$ such that $\mu/\lambda$ is a
horizontal strip, and the second is over all strict $\nu$ contained in $\lambda$ with $|\nu|=|\lambda|+k-n-1$ such that $\lambda/\nu$ is a horizontal strip.
\end{proposition}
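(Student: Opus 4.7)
The plan is to decompose $\sigma_k \star \sigma_\lambda$ according to the degree $d$ in the quantum expansion (\ref{multi}) and treat the classical ($d=0$) and quantum ($d\geq 1$) contributions separately. First I would bound the degree: a term $\sigma_\nu q^d$ can appear only if $|\nu|=|\lambda|+k-(n+1)d\geq 0$ with $\nu\in\dn$, and since $k\leq n$ and $\nu$ is strict with $\nu_1 \leq n$, one checks that only $d=0$ and $d=1$ can actually contribute to a Pieri-type product. In particular there are no higher-degree quantum corrections to worry about.

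For $d=0$ the coefficient of $\sigma_\mu$ is just the classical intersection number $\int_{\lg}\sigma_k\cdot\sigma_\lambda\cdot\sigma_{\widehat\mu}$, and so the $d=0$ contribution is computed by the classical Pieri rule of Hiller--Boe (see \cite{Pra and Rat 1}). That rule produces exactly the first sum $\sum_\mu 2^{N(\lambda,\mu)}\sigma_\mu$ indexed by horizontal strips $\mu/\lambda$ of size $k$.

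For $d=1$, I would use the quantum-to-classical principle: the Gromov--Witten invariant $\langle\sigma_k,\sigma_\lambda,\sigma_{\widehat\nu}\rangle_1$ counts lines on $\lg$ meeting three Schubert varieties in general position. Parametrizing these lines by the natural incidence correspondence (a two-step isotropic flag variety) makes the projection $\pi$ to $\lg$ into a $\mathbb{P}^1$-bundle over a smaller isotropic Grassmannian, so pulling back the three Schubert classes and pushing forward converts the Gromov--Witten count to a \emph{classical} intersection on this auxiliary space. That classical intersection is again of Pieri type, and its evaluation yields precisely the coefficient $2^{N'(\nu,\lambda)}$, with the support restricted to strict $\nu$ such that $\lambda/\nu$ is a horizontal strip of size $n+1-k$. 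The main obstacle in this approach is the $d=1$ step: one must carefully identify the moduli of lines as a $\mathbb{P}^1$-bundle over the correct auxiliary Lagrangian-type Grassmannian and then track the Schubert data through $\pi^*$ and $\pi_*$ with enough care to explain the drop in the exponent from $N$ to $N'$ (which reflects the absorption of one distinguished component of the horizontal strip into the fiber direction of $\pi$). Once that geometric identification is in place, summing the $d=0$ and $d=1$ contributions gives the claimed formula.
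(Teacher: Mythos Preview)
The paper does not give its own proof of this proposition: it is simply quoted, with attribution, from Kresch--Tamvakis \cite{KT2}, and no argument is reproduced here. So there is nothing in the paper to compare your proposal against.

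That said, your outline is essentially the strategy of the original reference. Kresch and Tamvakis first bound the quantum degree (only $d=0,1$ occur for a Pieri product), identify the $d=0$ piece with the classical Pieri rule of Hiller--Boe, and then compute the $d=1$ Gromov--Witten invariants by a quantum-to-classical argument: lines on $\lg$ are parametrized by an auxiliary isotropic flag variety, and pushing/pulling along the correspondence reduces the three-point degree-one count to a classical Pieri computation, which produces the $2^{N'(\nu,\lambda)}$ coefficients. Your identification of the ``main obstacle'' (tracking Schubert data through the correspondence and explaining the drop from $N$ to $N'$) is exactly where the work lies in \cite{KT2}. So your proposal is correct as a sketch, but it is a sketch of the cited proof rather than an alternative to anything in the present paper.
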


\begin{example} When $n=1$, the Schubert basis elements are $\sigma_0$ and $\sigma_1$.
By the quantum Pieri rule, we have 
\bn
\item $\sigma_1\star \sigma_0= \sigma_1,$
\item $  \sigma_1 \star \sigma_1=q.$
\en
\end{example}

\begin{example}
When $n=2,$ the Schubert basis consists of $\sigma_0, \sigma_1, \sigma_2, \sigma_{(2,1)}$.
The quantum multiplication by $\sigma_1$ is computed as follows.
\bn
\item $\sigma_1\star \sigma_0=\sigma_1,$
\item $\sigma_1\star \sigma_1=2\sigma_2,$
\item $\sigma_1\star \sigma_2=\sigma_{(2,1)}+q,$
\item $\sigma_1\star \sigma_{(2,1)}=\sigma_1 q.$
\en
\end{example}

\subsection{Quantum cohomology  of the orthogonal  Grassmannian}\label{subsec:ogo}
The quantum cohomology ring of $OG(n)$ can be defined in the same way as that  of   $\lg.$  We proceed with the orthogonal Grassmannian $OG(n)=OG^o(n)$.
Let $E$ be a complex vector space of dimension $2n+1$
equipped with a nondegenerate symmetric form. Fix a complete flag: $$\Fdot : 0=F_0\subset F_1\subset \cdots \subset F_n\subset E.$$ For $\lambda \in
\dn$, the Schubert variety $X_{\lambda}(\Fdot)$  is defined by the
same equation $(\ref{schubert})$ as in the above.  The Schubert class
$\tau_\lambda$ is defined as a cohomology class Poincar\'{e} dual to
$[X_{\lambda}(\Fdot)]$. Then $\tau_\lambda \in
H^{2l(\lambda)}(OG(n),\Z)$, and the cohomology classes
$\tau_\lambda$, $\lambda \in \mathcal{D}(n)$, form a $\Z$-basis for
$H^*(\og,\Z).$ 
\smallskip

 For $OG(n)$, the Gromov-Witten invariants are defined similarly.
 Given an integer $d\geq 0$, and $\lambda,\mu, \nu \in \mathcal{D}(n)$, the Gromov-witten invariant
$<\tau_\lambda, \tau_\mu, \tau_\nu>_d$ is defined as  the number of
rational maps $f:\mathbb{P}^1\rightarrow \og $ of degree $d$ such
that $f(0)\in X_{\lambda}(\Fdot),$ $f(1)\in X_{\mu}(
G_{\mbox{\boldmath{.}}}),$ and $f(\infty)\in X_{\nu}(
H_{\mbox{\boldmath{.}}})$, for given isotropic flags $\Fdot$,
$G_{\mbox{\boldmath{.}}},$ and $H_{\mbox{\boldmath{.}}}$ in general
position. Note that $<\tau_\lambda, \tau_\mu, \tau_\nu>_d=0$ unless
$|\lambda|+|\mu|+|\nu|=\textrm{deg}(\og)+2nd$. The quantum
cohomology ring of $\og$ is isomorphic to $H^*(\og,\Z)\otimes \Z[q]$
as  $\Z[q]$-modules. The multiplication in $qH^*(\og,\Z)$ is given
by the relation \be~\label{multi:2} \tau_\lambda \star
\tau_\mu=\sum<\tau_\lambda,\tau_\mu,\tau_{\hat{\nu}}>_d\tau_\nu
q^d,\ee where the sum is taken over $d \geq0$ and partitions $\nu$
with $|\nu|=|\lambda|+|\mu|-2nd.$

 \begin{theorem}[\cite{KT1}]\label{quantum coho:oge}
 The  ring $qH^{*}(\og,\Z)$ is presented as a quotient
of the polynomial ring $\Z[\tau_1,...,\tau_n,q]$ modulo the
relations $\tau_{i,i}=0$ for $i=1,...,n-1$ together with the quantum
relation $\tau_n^2=q$, where \be \label{tau_ii}
\tau_{i,i}:=\tau_i^2+2\sum_{k=1}^{i-1} (-1)^k \tau_{i+k}\star \tau_{i-k}
+ (-1)^i \tau_{2i}.\ee 
\end{theorem}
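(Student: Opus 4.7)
The plan is to follow the Siebert--Tian approach to the quantum cohomology of $G/P$: any set of relations that lifts a presentation of $H^*(\og,\Z)$ and holds in $qH^*(\og,\Z)$ automatically gives a presentation of the quantum ring. So the proof reduces to (a) knowing the classical presentation of $H^*(\og,\Z)$, and (b) verifying that the proposed relations $\tau_{i,i}=0$ (for $i<n$) and $\tau_n^2=q$ actually hold in $qH^*(\og,\Z)$.

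For (a), I would use $H^*(\og,\Z)\cong \Z[\tau_1,\ldots,\tau_n]/(\tau_{i,i}^{\mathrm{cl}}\colon 1\le i\le n)$, where $\tau_{i,i}^{\mathrm{cl}}$ is the polynomial (\ref{tau_ii}) with $\star$ replaced by ordinary multiplication and $\tau_j:=0$ for $j>n$. This is the classical Borel presentation, realized concretely through the $\widetilde{P}$-polynomials of \S 2.

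For (b), a cohomological degree count does almost all of the work. The selection rule $|\lambda|+|\mu|+|\nu|=\dim \og + 2nd$ makes $q$ homogeneous of cohomological degree $4n$, so any quantum correction to $\tau_{i,i}^{\mathrm{cl}}$ must have the form $P(\tau_1,\ldots,\tau_n)\,q^d$ with $d\ge 1$ and $|\cdot|$-degree of $P$ equal to $2i-2nd\ge 0$. For $1\le i\le n-1$ this is impossible, so $\tau_{i,i}=0$ holds verbatim in $qH^*(\og,\Z)$. For $i=n$ the only room is for a correction $\tau_n^2=cq$ with $c\in\Z$, noting that $\tau_{n,n}^{\mathrm{cl}}$ collapses to $\tau_n^2$ because every other summand involves an undefined $\tau_j$ with $j>n$.

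The main obstacle is to identify $c=1$. By (\ref{multi:2}) this number is the Gromov--Witten invariant $\langle \tau_n,\tau_n,\tau_{(n,n-1,\ldots,1)}\rangle_1$, counting degree-one rational maps $\P^1\to \og$ whose three marked points land in general translates of the Schubert varieties of these classes. I would compute it either enumeratively---using the description of lines on $\og$ as isotropic flags of the appropriate type, where the rigid incidence conditions should leave exactly one such line---or by computing $\tau_1\star\tau_{(n,n-1,\ldots,1)}$ via a Pieri-type argument and extracting the coefficient of $\tau_{(n-1,\ldots,1)}q$. Either way, the rest of the argument is degree bookkeeping.
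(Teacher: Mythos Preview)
The paper does not prove this theorem; it is quoted from \cite{KT1} (Kresch--Tamvakis) as a background result, with no proof supplied. So there is nothing in the paper to compare your argument against.

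That said, your sketch is essentially the Kresch--Tamvakis argument and is sound. The Siebert--Tian principle you invoke is exactly what is used: lift the classical Borel-type presentation, check the lifted relations hold quantum-mechanically, and conclude. Your degree count for $1\le i\le n-1$ is correct (with $\deg q=2n$ in $|\cdot|$-degree, a correction $Pq^d$ would need $|P|=2i-2nd\ge 0$, forcing $d=0$), and the collapse $\tau_{n,n}^{\mathrm{cl}}=\tau_n^2$ is right since every other summand contains some $\tau_j$ with $j>n$. The only nontrivial input is indeed the single three-point invariant $\langle\tau_n,\tau_n,\tau_{\rho_n}\rangle_1=1$ with $\rho_n=(n,n-1,\ldots,1)$; in \cite{KT1} this is obtained geometrically via the description of lines on $\og$, matching the first of your two suggested methods. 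Your alternative idea of reading it off from $\tau_1\star\tau_{\rho_n}$ would also work once the quantum Pieri rule is available, but note that in \cite{KT1} the logical order is the reverse: the presentation (and this one invariant) is established first, and the quantum Pieri rule is deduced from it, so invoking Pieri here would be circular relative to the original source.
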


\begin{proposition} [Quantum Pieri Rule for $\og$, \cite{KT1}]. For any $\lambda \in \mathcal{D}(n)$ and $k>0$, we have
\begin{displaymath}
\tau_k \star \tau_\lambda= \sum_{\mu}2^{N^\prime(\lambda,\mu)}\tau_\mu+\sum_{\mu\supset (n,n)}2^{N^\prime(\lambda,\mu)} \tau_{\mu\setminus (n,n)}
\end{displaymath}
where both sums are over $\mu  \supset \lambda$ with $|\mu|=|\lambda|+k$ such that $\mu/ \lambda$ is a horizontal
strip, and the second sum is restricted to those $\mu$ with two parts equal to $n.$
\end{proposition}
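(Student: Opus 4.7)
The plan is to combine the classical Pieri rule for $H^*(\og,\Z)$ with a direct computation of the degree-one Gromov--Witten corrections and a dimensional bound ruling out higher degrees, treating the stated formula as a sum of one classical term and one quantum term in the spirit of Theorem~\ref{quantum coho:oge}.

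First, for the classical piece: the ordinary product $\tau_k\cdot\tau_\lambda$ in $H^*(\og,\Z)$ is governed by the Pieri rule for Schubert classes on orthogonal Grassmannians, which one can derive from the $\widetilde{P}$-polynomial presentation of Section~2 by expanding the relevant Pfaffians $\mathrm{Pfaffian}(B_\mu)$ in the factorized form suggested by $\widetilde{P}_k\cdot\widetilde{P}_\lambda$. The expansion yields exactly those strict $\mu\supset\lambda$ for which $\mu/\lambda$ is a horizontal strip, with coefficient $2^{N'(\lambda,\mu)}$ reflecting the number of connected components of the strip; this reproduces the first sum verbatim.

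Next, for the degree-one quantum correction, I would apply a quantum-to-classical principle of Buch--Kresch--Tamvakis type, which identifies a degree-one Gromov--Witten invariant $\langle \tau_k,\tau_\lambda,\tau_{\hat\nu}\rangle_1$ on $\og$ with a classical intersection number on a related two-step isotropic flag variety. Pulling back the codimension-$k$ Pieri class along this correspondence reduces the enumeration to a combinatorial count on the flag variety, which I would match against $2^{N'(\lambda,\mu)}$ for the \emph{non-strict} partition $\mu$ having $\mu_1=\mu_2=n$ and $\mu/\lambda$ a horizontal strip, with $\nu=\mu\setminus(n,n)$. This produces exactly the second sum, written in $q$-less form by virtue of the quantum relation $\tau_n^2 = q$ from Theorem~\ref{quantum coho:oge}.

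Finally, I would rule out contributions of degree $d\geq 2$. The degree condition $|\lambda|+k+|\hat\nu|=\dim\og+2nd$ combined with $k\leq n$ and the strictness of $\hat\nu\in\mathcal{D}(n)$ gives a quick elimination for small $n$, but a uniform argument requires a geometric ingredient: the codimension-$k$ Pieri cycle is cut out by a single incidence condition, and the moduli of degree-$d$ rational curves in $\og$ meeting three such general Schubert cycles has insufficient virtual dimension once $d\geq 2$. I expect this last step to be the main obstacle, since for large $n$ the naive dimension count no longer suffices and one must use either a careful degeneration or a deeper geometric input about multi-covering stable maps into the orthogonal Grassmannian; by comparison, the quantum-to-classical reduction in step two and the classical Pfaffian Pieri in step one are, at this point, well-established technology.
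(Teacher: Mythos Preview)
The paper does not prove this proposition at all: it is quoted verbatim from \cite{KT1} as background, with no argument supplied. There is therefore no ``paper's own proof'' to compare your proposal against.

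That said, a brief comment on your sketch relative to the original source \cite{KT1}. Your three-step outline---classical Pieri via $\widetilde P$-polynomials, degree-one corrections via a quantum-to-classical correspondence, and a vanishing argument for $d\geq 2$---is broadly reasonable but is not how Kresch and Tamvakis actually proceed. Their proof in \cite{KT1} works instead by relating the degree-one Gromov--Witten invariants on $\og$ to classical intersection numbers on the \emph{Lagrangian} Grassmannian $LG(n+1)$ (not a two-step isotropic flag variety), exploiting the coincidence of Schubert combinatorics between types $B$/$D$ and $C$; the Buch--Kresch--Tamvakis quantum-to-classical machinery you invoke postdates \cite{KT1}. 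Your concern about step three is well placed: in \cite{KT1} the vanishing of $d\geq 2$ contributions is handled not by a naive dimension count but by a geometric argument using the kernel/span construction for rational curves in $\og$, and this is indeed where the real work lies.
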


For a later use, we compute the followings.

\begin{example} \label{Example3}
When $n=1$, the Schubert basis elements are $\tau_0$ and $\tau_1$.
By the quantum Pieri rule, we have 
\bn
\item $\tau_1\star \tau_0= \tau_1,$
\item $  \tau_1 \star \tau_1=q.$
\en
\end{example}

\begin{example}\label{Example4}
When $n=2,$ the Schubert basis consists of $\tau_0, \tau_1, \tau_2, \tau_{(2,1)}$.
The quantum multiplication by $\tau_1$ is computed as follows.
\bn
\item $\tau_1\star \tau_0=\tau_1,$
\item $\tau_1\star \tau_1=\tau_2,$
\item $\tau_1\star \tau_2=\tau_{(2,1)},$
\item $\tau_1\star \tau_{(2,1)}=q.$
\en
\end{example}

\section{Quantum multiplication operators}

In this section, we give a description of eigenvalues of the quantum multiplication operators on $H^\star(M,\C)$. See \cite{CH2} for more details.  

\subsection{Notations}
For $n = 2m+1$, let
\[
\mathcal{T}_n \ := \ \{ J = (j_1, \ldots , j_n) \in \mathbb{Z}^n \: | \: -m \leq j_1 < \dots < j_n \leq 3m+1 \} ,
\]
and for $n = 2m$, let
\[
\mathcal{T}_n \ := \ \left\{ J = (j_1 , \ldots , j_n) \in \left( \mathbb{Z} + \frac{1}{2} \right)^n \: {\big|} \: -m +\frac{1}{2} \leq j_1 < \dots < j_n \leq 3m- \frac{1}{2} \right\} .
\]
For $J = (j_1, \ldots , j_n) \in \mathcal{T}_n$ and $\zeta := \exp \left({\frac{\pi \sqrt{-1}}{n}}\right)$, we write $$\zeta^J := (\zeta^{j_1}, \ldots ,\zeta^{j_n}).$$ Let  $\In$ be a subset of $\mathcal{T}_n$ defined as
\[ \In \ := \ \left\{ J = (j_1, \ldots , j_n) \in \mathcal{T}_n \hspace{0.05in}| \: \zeta^{j_k} \ne -\zeta^{j_l}  \text{ for } k \neq l \right\} . \]
Note that $\prod_k \zeta^{j_k} = \pm 1$ for $J = (j_1 , \ldots , j_n) \in \In$. We put
$$ \In^e \ := \ \left\{ J \in \In \hspace{0.05in}\big|\hspace{0.05in} {\prod}_k \zeta^{j_k} = 1 \right\} . $$

We can easily check that $|\mathcal{I}_n|=2^n=|\mathcal{D}(n)|,$ and $|\In^e|=2^{n-1}$.
\bigskip
\subsection{Eigenvalues of $c_1(M)$}
For $M=\lg$, or $OG(n),$ let $H^\star(M;\C)$ be the specialization of the (complexified) quantum ring $qH^*(M,\C)$ at $q=1$, i.e.,
 $$H^\star(M;\C):=qH^*(M,\C)/<q-1>.$$
Then $H^\star(\lg;\C)$ (resp.  $H^\star(OG(n);\C)$)  is a complex vector space of  dimension $2^n$  with a Schubert basis $\{\sigma_\lambda \ | \ \lambda \in \mathcal{D}(n) \}$ (resp. $\{\tau_\lambda \ | \ \lambda \in \mathcal{D}(n)\} $).

For each $I\in \mathcal{I}_{n+1}^e,$ let $$\sigma_I=\sum_{\nu \in
\mathcal{D}(n)}\Qt_\nu(\delta\zeta^I))\sigma_{\hat{\nu}},$$ where $\delta=(\frac{1}{2})^{\frac{1}{n+1}}$,

\begin{theorem}\label{Main-Thm2}

 For  $\lambda \in \mathcal{D}(n)$,
 the  operator $[\sigma_\lambda]$ on
 $H^\star(\lg,\C)$ has  eigenvectors $\sigma_I$ with eigenvalues
 $\Qt_\lambda(\delta\zeta^I).$ In fact, 
  $\{\sigma_I \hspace{0.04in}|\hspace{0.04in}I \in \mathcal{I}^e_{n+1}\}$
 forms a simultaneous eigenbasis of the vector space $H^\star(\lg,\C)$
for the operators $[F]$ with  $H^\star(\lg,\C)$. 
\end{theorem}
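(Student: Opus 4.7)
The plan is to use the presentation in Theorem \ref{quantum coho:lg} to realize $H^\star(\lg,\C)$ as an explicit quotient of a polynomial ring and then to identify the simultaneous spectrum of the operators $[F]$ via Pragacz--Ratajski's $\widetilde{Q}$-polynomial model. Concretely, the assignment $\sigma_i\mapsto \widetilde{Q}_i(X)$, where $X=(x_1,\ldots,x_n)$ is a set of variables, should induce an isomorphism
\[
H^\star(\lg,\C)\;\cong\; \C[X]^{S_n}\Big/\Big\langle \widetilde{Q}_i(X)^2+2\sum_{k=1}^{n-i}(-1)^k\widetilde{Q}_{i+k}(X)\widetilde{Q}_{i-k}(X)-(-1)^{n-i}\widetilde{Q}_{2i-n-1}(X)\,\Big|\,1\leq i\leq n\Big\rangle,
\]
the ideal being obtained by specializing the quantum relations of Theorem \ref{quantum coho:lg} at $q=1$.

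The first task is to verify that the points $X=\delta\zeta^I$ with $I\in\mathcal{I}_{n+1}^e$ are precisely the simultaneous zeros of the generators of this ideal. The normalization $\delta^{n+1}=1/2$ together with the ``even'' constraint $\prod_k\zeta^{j_k}=1$ defining $\mathcal{I}_{n+1}^e$ are exactly what are needed to absorb the factor of $2$ and the sign on the right-hand side of each quantum relation after specializing $q=1$. Since $|\mathcal{I}_{n+1}^e|=2^n=\dim_\C H^\star(\lg,\C)$, a dimension count forces these zeros to be simple and to exhaust the spectrum, so $H^\star(\lg,\C)$ is reduced and semisimple, splitting as a direct product of $2^n$ copies of $\C$ indexed by $\mathcal{I}_{n+1}^e$. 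Let $e_I$ denote the associated primitive idempotents.

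Next, the element $\sigma_I$ of the statement is identified with a nonzero scalar multiple of $e_I$. Via the $\widetilde{Q}$-polynomial isomorphism, the characters $H^\star(\lg,\C)\to\C$ are the evaluations $\chi_{I'}:\sigma_\lambda\mapsto\widetilde{Q}_\lambda(\delta\zeta^{I'})$ for $I'\in\mathcal{I}_{n+1}^e$, and $e_I$ is uniquely characterized by $\chi_{I'}(e_I)=\delta_{I,I'}$. The defining formula $\sigma_I=\sum_\nu \widetilde{Q}_\nu(\delta\zeta^I)\sigma_{\hat\nu}$ satisfies this system (up to a nonzero common scalar) provided the $\widetilde{Q}$-polynomials obey the Poincaré-dual orthogonality
\[
\sum_{\nu\in\mathcal{D}(n)}\widetilde{Q}_\nu(\delta\zeta^I)\,\widetilde{Q}_{\hat\nu}(\delta\zeta^{I'}) \;=\; c_I\,\delta_{I,I'}
\]
at the spectrum. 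Granting this, the eigenvalue claim is immediate: $[\sigma_\lambda]\sigma_I=\widetilde{Q}_\lambda(\delta\zeta^I)\sigma_I$, and more generally $[F]\sigma_I=F(\delta\zeta^I)\sigma_I$ when $F$ is identified with its $\widetilde{Q}$-polynomial representative. Linear independence of the $\sigma_I$ is then automatic from the linear independence of primitive idempotents of a semisimple algebra.

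The hard part is the verification of the two polynomial identities at $X=\delta\zeta^I$: the vanishing of the quantum relations and the orthogonality above. Since the $\widetilde{Q}_\nu$ are Pfaffians, their evaluations at roots of unity are not transparent. The cleanest route appears to be to pass to the generating series for the elementary symmetric polynomials $E_k(X)$, rewrite each quantum relation (and the orthogonality sum) as the coefficient extraction of a rational function in an auxiliary variable, and reduce to a Vandermonde-type root-of-unity identity whose success is governed precisely by the parity condition $\prod_k\zeta^{j_k}=1$ combined with $\delta^{n+1}=1/2$. Once this identity is secured, the rest of the argument is formal commutative algebra.
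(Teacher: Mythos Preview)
The paper does not prove this theorem; it is quoted from \cite{CH2} with only the statement given, so there is no in-paper argument to compare your proposal against. That said, your outline is in the right spirit and matches the philosophy of \cite{CH2}: realize $H^\star(\lg,\C)$ as a finite reduced $\C$-algebra via the Kresch--Tamvakis presentation, identify the characters as evaluations of the $\widetilde{Q}$-polynomials at suitable roots-of-unity points, and read off the idempotents.

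There is, however, one concrete mismatch you should address. You set up the model in $n$ variables $X=(x_1,\dots,x_n)$, but the index set in the theorem is $\mathcal{I}_{n+1}^e$, whose elements $I$ are $(n{+}1)$-tuples, and $\zeta=\exp(\pi\sqrt{-1}/(n{+}1))$; thus $\delta\zeta^I$ is a point of $\C^{n+1}$, not $\C^n$. The evaluation $\widetilde{Q}_\nu(\delta\zeta^I)$ in the definition of $\sigma_I$ is therefore at $n{+}1$ arguments, and your proposed isomorphism with $\C[X]^{S_n}/\langle\cdots\rangle$ in $n$ variables cannot literally be the receptacle for these evaluation points. The fix is to work in $n{+}1$ variables with a constraint (the ``even'' condition $\prod_k\zeta^{j_k}=1$, combined with $\delta^{n+1}=1/2$, effectively cuts down one dimension), or equivalently to present the ring as a quotient of $\Z[E_1,\dots,E_{n+1}]$ with $E_{n+1}$ specialized. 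Without this correction the dimension count $2^n=|\mathcal{I}_{n+1}^e|$ does not match the scheme you wrote down.

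Beyond that, your proposal is honest about its real gap: the two ``hard'' identities---vanishing of the quantum relations at $X=\delta\zeta^I$ for $I\in\mathcal{I}_{n+1}^e$, and the orthogonality $\sum_\nu\widetilde{Q}_\nu(\delta\zeta^I)\widetilde{Q}_{\hat\nu}(\delta\zeta^{I'})=c_I\delta_{I,I'}$---are asserted but not carried out. These are the substantive content of \cite{CH2}; until you supply them (e.g.\ via the generating-series/Vandermonde argument you sketch, or via Rietsch's total positivity framework which underlies Lemma~\ref{evaluation1}), the proposal remains an outline rather than a proof.
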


For each $I\in \In,$ let $$\tau_I=\sum_{\nu \in
\mathcal{D}(n)}\Pt_\nu(\epsilon\zeta^I))\tau_{\hat{\nu}},$$ where
$\epsilon=(2)^{\frac{1}{n}}$.
\begin{theorem}\label{Main-Thm1}
 For $\lambda \in \mathcal{D}(n)$,
 the  operator $[\tau_\lambda]$ on  $H^\star(OG(n),\C)$ has
 eigenvectors $\tau_I$ with eigenvalues $\Pt_\lambda(\epsilon\zeta^I).$ In particular,
 $\{\tau_I\hspace{0.03in}|\hspace{0.03in}I \in \mathcal{I}_n\}$
 forms
a simultaneous eigenbasis of the vector space $H^\star(OG(n),\C)$
for the operators $[F]$ with  $F \in H^\star(\og,\C)$.
\end{theorem}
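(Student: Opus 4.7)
The plan is to realise $H^\star(\og,\C)$ as a semisimple commutative Frobenius algebra whose spectrum consists of the points $\alpha_J := \epsilon\zeta^J$ for $J\in\In$, and then to recognise $\tau_I$ as a scalar multiple of the idempotent at $\alpha_I$.

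Starting from Theorem \ref{quantum coho:oge}, I would pass to the ambient symmetric polynomial ring by substituting $\tau_i \mapsto \Pt_i(X) = E_i(X)/2$ with $X = (x_1,\ldots,x_n)$. The key identity is $\Qt_{i,i}(X) = e_i(x_1^2,\ldots,x_n^2)$, read off from $E(t)E(-t) = \prod_k (1 - x_k^2 t^2)$; after rescaling, $\tau_{i,i}$ corresponds to $\frac{1}{4}e_i(x_1^2,\ldots,x_n^2)$ while $\tau_n^2 - 1$ corresponds to $\frac{1}{4}((x_1\cdots x_n)^2 - 4)$. Hence the defining ideal cuts out the $S_n$-orbits of tuples for which $\prod_k(T - x_k^2) = T^n + (-1)^n\cdot 4$, i.e.\ for which $\{x_k^2\}$ is the full root set of $T^n = (-1)^{n+1}\cdot 4$. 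These roots are $\epsilon^2\zeta^{2k}$ when $n$ is odd and $\epsilon^2\zeta^{2k+1}$ when $n$ is even, so each $x_k$ has the form $\pm \epsilon\zeta^j$ for an integer or half-integer $j$. Using $\zeta^n = -1$, the sign ambiguity becomes a shift $j\mapsto j+n$, and a solution modulo $S_n$ amounts to choosing one element from each of the $n$ antipodal pairs on the circle of radius $\epsilon$ --- precisely the combinatorial content of $\In$. Since $|\In| = 2^n = \dim_\C H^\star(\og,\C)$, the quotient is semisimple with spectrum exactly $\{\alpha_J : J\in\In\}$.

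Semisimplicity yields $H^\star(\og,\C) = \bigoplus_{J\in\In}\C e_J$ with orthogonal idempotents $e_J$, and the eigenvalue of $[\tau_\lambda]$ on $e_J$ is the evaluation $\Pt_\lambda(\alpha_J)$. To identify $\tau_I$ with a nonzero scalar multiple of $e_I$, I would invoke the Frobenius structure: the classical Poincar\'e pairing extends $\Z[q]$-linearly to $qH^*(\og,\C)$ (since quantum corrections do not affect the top-degree pairing) and descends to $H^\star(\og,\C)$, for which $\{\tau_\lambda\}$ and $\{\tau_{\hat\nu}\}$ remain dual bases. A standard character computation in a semisimple commutative Frobenius algebra shows that for any such dual pair, $\sum_\nu \Pt_\nu(\alpha_J)\tau_{\hat\nu}$ equals a nonzero scalar multiple of $e_J$, which is exactly the definition of $\tau_I$; linearity in $F$ upgrades this to the simultaneous eigenbasis claim.

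The hardest step will be the combinatorial matching between the spectrum and $\In$: the parity-dependent split between integer and half-integer exponents in $\mathcal{T}_n$, together with the antipodal-exclusion condition, must line up exactly with the square-root choices arising from the roots of $T^n = (-1)^{n+1}\cdot 4$; this requires carefully tracking the ranges defining $\mathcal{T}_n$ and verifying that they form complete sets of residues mod $2n$. A secondary subtlety is justifying that the Poincar\'e pairing genuinely descends to a Frobenius pairing on $H^\star(\og,\C)$ at $q = 1$ under which $\{\tau_\lambda\}$ and $\{\tau_{\hat\nu}\}$ remain dual.
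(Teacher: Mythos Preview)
The paper does not itself prove this theorem: Section~4 quotes it from \cite{CH2}, so there is no in-paper argument to compare against. Your outline is essentially the standard route (and in the spirit of \cite{CH2}): realise the presentation of Theorem~\ref{quantum coho:oge} inside symmetric functions, compute the reduced spectrum, and read off idempotents via the Frobenius pairing. The spectrum computation, the parity bookkeeping matching it to $\In$, and the Frobenius identification of $\tau_I$ with a scalar multiple of $e_I$ are all correct as you describe them; the duality $\langle\tau_\lambda,\tau_{\hat\mu}\rangle=\delta_{\lambda\mu}$ does survive the specialisation $q=1$ because the two-point invariant carries no quantum correction and the Frobenius compatibility is the $S_3$-symmetry of three-point Gromov--Witten invariants, so the ``secondary subtlety'' you flag is harmless.

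There is one genuine step you skate over. After establishing semisimplicity you write that ``the eigenvalue of $[\tau_\lambda]$ on $e_J$ is the evaluation $\Pt_\lambda(\alpha_J)$''. What you have actually shown is that the character $\chi_J$ sends $\tau_k\mapsto\Pt_k(\alpha_J)$ for the \emph{special} classes $\tau_1,\dots,\tau_n$. To conclude $\chi_J(\tau_\lambda)=\Pt_\lambda(\alpha_J)$ for arbitrary $\lambda\in\mathcal D(n)$ you need that the Schubert class $\tau_\lambda$ is expressed in $qH^*(\og)$ by the \emph{same} Pfaffian polynomial in $\tau_1,\dots,\tau_n$ as classically, with no $q$-correction. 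This is the quantum Giambelli formula for $\og$, proved in \cite{KT1} alongside the presentation you are already citing; you should invoke it explicitly, since without it the eigenvalue identification is unjustified for $l(\lambda)\geq 2$.
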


The following result is due to Rietsch.
\begin{lemma}[\cite{Riet1}, p. 542]\label{evaluation1} The evaluation
$E_1(\zeta^{I_0})$ is a positive real number which is equal to
$E_1(\zeta^{I_0})=\frac{1}{\sin(\pi/2n)}.$
\end{lemma}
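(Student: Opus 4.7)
The plan is to unpack $I_0$ as the central arithmetic progression in $\mathcal{T}_n$ of common difference $1$: namely $I_0=(-m,-m+1,\ldots,m)$ when $n=2m+1$, and $I_0=(-m+\tfrac12,-m+\tfrac32,\ldots,m-\tfrac12)$ when $n=2m$. In both parities the maximum difference between entries is $n-1<n$, so no two entries $j_k,j_l$ satisfy $\zeta^{j_k}=-\zeta^{j_l}$ (equivalently, $j_k-j_l\not\equiv n\pmod{2n}$), whence $I_0\in\mathcal{I}_n$. Moreover $I_0$ is symmetric about $0$, so the multiset $\{\zeta^{j}:j\in I_0\}$ is closed under complex conjugation, which already forces $E_1(\zeta^{I_0})$ to be real.

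Next, I would evaluate the sum $E_1(\zeta^{I_0})=\sum_{j\in I_0}\zeta^{j}$ as a geometric series. Writing $a$ for the smallest entry of $I_0$ and using $\zeta^n=e^{\pi i}=-1$, one obtains
\[
E_1(\zeta^{I_0}) \;=\; \zeta^{a}\cdot\frac{\zeta^{n}-1}{\zeta-1} \;=\; \frac{-2\,\zeta^{a-1/2}}{\zeta^{1/2}-\zeta^{-1/2}},
\]
after multiplying numerator and denominator by $\zeta^{-1/2}$ to symmetrize. The central choice of $I_0$ yields $a-\tfrac12=-n/2$ uniformly in both parities, so $\zeta^{a-1/2}=e^{-\pi i/2}=-i$; the numerator collapses to $2i$ and the denominator to $2i\sin(\pi/(2n))$. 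This produces the asserted value $E_1(\zeta^{I_0})=1/\sin(\pi/(2n))$, which is positive real since $0<\pi/(2n)<\pi/2$.

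The only real obstacle is bookkeeping across the two parities: one must verify that the integer versus half-integer indexing embedded in the definition of $\mathcal{T}_n$ combines correctly with the symmetrizing factor $\zeta^{-1/2}$ so that $\zeta^{a-1/2}=-i$ uniformly in $n$. Once this identification is carried out, the entire computation reduces to a one-line manipulation using only $\zeta^n=-1$ and the standard identity $\zeta^{1/2}-\zeta^{-1/2}=2i\sin(\pi/(2n))$, with no further combinatorial input required.
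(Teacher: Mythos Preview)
Your computation is correct. The paper itself does not prove this lemma: it simply attributes the result to Rietsch \cite{Riet1} and states the value. You have supplied a direct, self-contained verification via the geometric-series identity, which is exactly the natural argument.

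One remark worth making explicit: the paper never actually defines $I_0$; you have (correctly) inferred that it must be the unique element of $\mathcal{I}_n$ consisting of $n$ consecutive indices centered at $0$, namely $I_0=(-m,\ldots,m)$ for $n=2m+1$ and $I_0=(-m+\tfrac12,\ldots,m-\tfrac12)$ for $n=2m$. Your checks that $I_0\in\mathcal{T}_n$, that no two entries differ by $n$ modulo $2n$ (so $I_0\in\mathcal{I}_n$), and that the entries sum to $0$ (so $I_0\in\mathcal{I}_n^e$) are all correct and worth retaining, since the paper is silent on this point. The parity bookkeeping you flag as the only obstacle is indeed handled uniformly by the observation $a-\tfrac12=-n/2$ in both cases, and the rest is the one-line manipulation you describe.
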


The first Chern classes of $\og$ and $\lg$ , respectively, are given as follows (see \cite{FW}, Lemma $3.5$):
$$ c_1(\og)=2n\tau_1\hspace{0.2in}\textrm{and}\hspace{0.2in}
c_1(\lg)=(n+1)\sigma_1.$$

By this fact, together with Theorems \ref{Main-Thm2}, \ref{Main-Thm1} and  Lemma \ref{evaluation1}, we obtain

\begin{corollary}\label{Coro:eigenvalue}
 The maximal modulus eigenvalues $\delta_0=\delta_0(M)$ for $M=\lg, \og$ are given as follows.
\bn
\item For $\lg,$ $$\delta_0=2^{-\frac{1}{n+1}}\ (n+1)\ \big(\sin\frac{\pi}{2(n+1)}\big)^{-1}.$$
\item For $\og$, $$ \delta_0=2^{\frac{1}{n}}\ n \ \big(\sin{\frac{\pi}{2n}}\big)^{-1}.$$
 \en 
\end{corollary}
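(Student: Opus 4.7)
The plan is to specialize Theorems \ref{Main-Thm1} and \ref{Main-Thm2} to $\lambda = (1)$ and then rescale by the constants appearing in $c_1(\lg)=(n+1)\sigma_1$ and $c_1(\og)=2n\tau_1$. Since the text already records $\Qt_{(1)}(X)=E_1(X)$ and $\Pt_{(1)}(X)=\tfrac{1}{2}E_1(X)$, maximizing the modulus of an eigenvalue of $[c_1(M)]$ reduces to maximizing $|E_1(\zeta^I)|$ over the appropriate index set, which is precisely the content of Lemma \ref{evaluation1}.

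For $\og$, I would first invoke Theorem \ref{Main-Thm1}, which identifies the eigenvalues of $[\tau_1]$ on $H^\star(\og,\C)$ as $\Pt_1(\epsilon\zeta^I)=\tfrac{\epsilon}{2}E_1(\zeta^I)$ for $I\in \mathcal{I}_n$. Multiplying by $2n$ gives eigenvalues $n\epsilon\,E_1(\zeta^I)$ of $[c_1(\og)]$, and substituting $\epsilon = 2^{1/n}$ together with Lemma \ref{evaluation1} yields the claimed formula $\delta_0(\og)=2^{1/n}\,n/\sin(\pi/2n)$.

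For $\lg$, I would apply Theorem \ref{Main-Thm2}, which supplies the eigenvalues of $[\sigma_1]$ as $\Qt_1(\delta\zeta^I)=\delta E_1(\zeta^I)$ for $I\in \mathcal{I}_{n+1}^e$. Scaling by $n+1$ and using $\delta=2^{-1/(n+1)}$, the claim becomes
\[
\delta_0(\lg) \;=\; 2^{-1/(n+1)}(n+1)\cdot \max_{I\in \mathcal{I}_{n+1}^e}|E_1(\zeta^I)|,
\]
and the maximum on the right equals $1/\sin(\pi/2(n+1))$ by Lemma \ref{evaluation1} with $n$ replaced by $n+1$.

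The main (and essentially only) subtle point is verifying, in the $\lg$ case, that the extremal index $I_0$ provided by Lemma \ref{evaluation1} actually lies in the parity-restricted subset $\mathcal{I}_{n+1}^e$ rather than merely in $\mathcal{I}_{n+1}$; once this is confirmed from the explicit description of $I_0$ in Rietsch's argument, the remainder of the proof is straightforward bookkeeping between the normalization constants $\delta$ and $\epsilon$ and the sine expression for $E_1(\zeta^{I_0})$.
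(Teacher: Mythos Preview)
Your proposal is correct and follows exactly the route the paper takes: the corollary is deduced in the paper simply by combining the Chern-class formulas $c_1(\lg)=(n+1)\sigma_1$, $c_1(\og)=2n\tau_1$ with Theorems~\ref{Main-Thm2}, \ref{Main-Thm1} and Lemma~\ref{evaluation1}, which is precisely what you do after specializing to $\lambda=(1)$ and using $\Qt_{(1)}=E_1$, $\Pt_{(1)}=\tfrac12 E_1$. If anything you are more explicit than the paper, since you flag the need to check that Rietsch's extremal index $I_0$ lies in the parity-restricted set $\mathcal{I}_{n+1}^e$; the paper passes over this point silently.
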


\section{Proof of the main results}
Recall that $$\mathrm{dim}\ \lg=\mathrm{dim}\ \og=\frac{n(n+1)}{2}.$$
To ease notation, write $$d(n)=\frac{n(n+1)}{2}+1.$$
Proving Galkin's lower bound conjecture for $\lg$ and $\og$ is reduced to showing that for each of two $\delta_0$s,  holds the inequality
$$\delta_0\geq d(n).$$
To show these inequalites, we use a ``polynomial approximation to the inequalities".
\begin{lemma}\label{Lemma:LG}Let $f(x)=2x-2^x(2x^2-x+1)\sin\frac{\pi x}{2}$. Then we have
$f(x)\geq 0$ for all $x\in (0,\frac{1}{3}].$
\end{lemma}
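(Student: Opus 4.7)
The plan is to replace the transcendental factor $\sin(\pi x/2)$ in the definition of $f$ by an elementary linear upper bound and thereby reduce the lemma to a polynomial inequality. Applying the standard estimate $\sin\theta\leq\theta$ (valid for $\theta\geq 0$) with $\theta=\pi x/2$ gives, for $x\in(0,1/3]$,
$$f(x)\;\geq\;2x-\frac{\pi x}{2}\cdot 2^{x}(2x^{2}-x+1)\;=\;x\left(2-\frac{\pi}{2}\,g(x)\right),$$
where $g(x):=2^{x}(2x^{2}-x+1)$. Since $x>0$, it therefore suffices to establish $g(x)\leq 4/\pi$ throughout $[0,1/3]$.

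To bound $g$ I plan to dominate each of its two factors separately. On the one hand, $2x^{2}-x+1-1=x(2x-1)\leq 0$ on $[0,1/2]$, so $2x^{2}-x+1\leq 1$ on $[0,1/3]$. On the other hand, monotonicity of the exponential gives $2^{x}\leq 2^{1/3}$. Multiplying these two positive estimates yields $g(x)\leq 2^{1/3}$ on the whole interval. The lemma is thus reduced to the single numerical inequality $2^{1/3}\leq 4/\pi$, which after cubing becomes $\pi^{3}\leq 32$; this follows from any reasonable upper bound such as $\pi<3.15$, since $3.15^{3}\approx 31.26<32$.

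The main obstacle is the tightness of this final numerical step: the two sides of $2^{1/3}\leq 4/\pi$ are approximately $1.260$ and $1.273$, so the estimate is genuinely close to an equality, and one must keep the approximations on $\pi$ reasonably sharp. A related subtlety is the choice of linearization in the first step — the bound $\sin\theta\leq\theta$ is sharp precisely at $\theta=0$, and because $f(0)=0$ as well, no weaker linear replacement for $\sin$ has any chance of working. Once these two points are handled, the rest of the argument is an elementary polynomial comparison.
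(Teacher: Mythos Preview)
Your argument is correct and is essentially the paper's own proof: both replace $\sin(\pi x/2)$ by $\pi x/2$, bound $2^{x}\leq 2^{1/3}$ on $(0,\tfrac13]$, and reduce everything to the numerical fact $\tfrac{\pi}{2}\,2^{1/3}\leq 2$ (equivalently $\pi^{3}\leq 32$) together with $2x^{2}-x+1\leq 1$ on this interval. The only cosmetic difference is that the paper packages the last two ingredients as the nonnegativity of the polynomial $u(x)=2x-2x(2x^{2}-x+1)=2x^{2}(1-2x)$, while you state them separately.
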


\begin{proof}
Note that if $x\in (0,\frac{1}{3}],$ then $\frac{\pi}{2}2^x\leq \frac{\pi}{2}2^{\frac{1}{3}}\leq 2.$ Thus  we have $$f(x)\geq 2x-2^x(2x^2-x+1)\frac{\pi x}{2}\geq 2x-2x(2x^2-x+1):=u(x) ,$$ where we used the inequality $\sin y \leq y$ for all $y.$
We can check that $u^\prime(x)\geq 0$ and hence $u(x)$ is increasing on $(0,\frac{1}{3}]$.  Since $u(0)=0$, we have $u(x)\geq 0$ and hence $h(x)\geq 0$ on $(0,\frac{1}{3}].$
\end{proof}

\begin{lemma}\label{Lemma:OG}Let $$h(x)=2^{x+1}x-(2x^2+x+1)\mathrm{\sin} \frac{ x}{2}\pi.$$
Then we have $h(x)\geq 0$ for all $0\leq x \leq \frac{1}{6}$.
\end{lemma}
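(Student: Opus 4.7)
The plan is to parallel the proof of Lemma \ref{Lemma:LG}: replace the sine by the linear upper bound $\sin y \leq y$ (valid for $y \geq 0$), factor out an $x$, and reduce the claim to an elementary numerical inequality on $[0,1/6]$.

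First, for $x \in [0, 1/6]$ we have $\pi x/2 \geq 0$, and the coefficient $2x^2+x+1$ is positive, so $\sin(\pi x/2) \leq \pi x/2$ yields
$$h(x) \;\geq\; 2^{x+1}x - \frac{\pi x}{2}(2x^2+x+1) \;=\; x\left[\,2^{x+1} - \frac{\pi}{2}(2x^2+x+1)\,\right].$$
Next, I would bound the bracketed expression from below on the interval. Since $2^{x+1}$ is increasing in $x$, we have $2^{x+1} \geq 2$ throughout $[0,1/6]$. On the other hand $2x^2+x+1$ has positive derivative $4x+1$, so it is increasing on $[0, 1/6]$ and hence maximized at the right endpoint, where it takes the value $2/36 + 1/6 + 1 = 11/9$. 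This yields the uniform estimate
$$h(x) \;\geq\; x\left[\,2 - \frac{11\pi}{18}\,\right].$$

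It then only remains to check that the constant $2 - 11\pi/18$ is nonnegative, equivalently $11\pi \leq 36$, which is immediate from $\pi < 36/11 = 3.\overline{27}$ (weaker than any two-decimal estimate of $\pi$). Combined with $x \geq 0$, this gives $h(x) \geq 0$ on $[0, 1/6]$.

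I do not anticipate a real obstacle: the argument is structurally identical to that for Lemma \ref{Lemma:LG}, and the only arithmetic input is the trivial inequality $11\pi \leq 36$. The endpoint $1/6$ looks essentially sharp for this linearization approach, since the final constant $2 - 11\pi/18 \approx 0.08$ leaves only a small margin; pushing beyond $1/6$ would require a sharper bound on $\sin(\pi x/2)$ (for instance $\sin y \leq y - y^3/6 + y^5/120$) or a more careful handle on $2^{x+1}$, rather than the crude $2^{x+1} \geq 2$ used here.
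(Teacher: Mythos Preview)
Your proof is correct and is essentially the same as the paper's: both use $\sin(\pi x/2)\leq \pi x/2$ and $2^{x+1}\geq 2$, and the remaining numerical check (yours: $11\pi\leq 36$; the paper's: $\pi/2\leq 18/11$ and then $v(x):=2x-\tfrac{18}{11}(2x^2+x+1)x\geq 0$ on $[0,1/6]$) is the same inequality. Your presentation is in fact a bit more explicit, since the paper leaves the verification of $v(x)\geq 0$ to the reader, whereas you evaluate at the endpoint $x=1/6$ directly.
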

\begin{proof} Note that $\mathrm{\sin} \frac{ x}{2}\pi\leq \frac{\pi x}{2}$ and $\frac{\pi}{2}\leq \frac{18}{11}$, $h(x)\geq 2x- \frac{18}{11}(2x^2+x+1)x =:v(x)$.
But, as in the above, we can easily check:  $v(x)\geq 0$ and hence $h(x)\geq 0$  for all $0\leq x \leq \frac{1}{6}$.
\end{proof}

\begin{proof}

\end{proof}

\begin{theorem}
Galkin's lower bound conjecture holds, i.e., $\delta_0\geq d(n)$, for $\lg$ and $\og$, and the equality $\delta_0=d(n)$ holds  for $LG(1)$, $OG(1)$ and $OG(2).$ 
\end{theorem}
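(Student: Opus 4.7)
The plan is to reduce each of the two inequalities $\delta_{0}(LG(n))\ge d(n)$ and $\delta_{0}(OG(n))\ge d(n)$ to the polynomial inequalities already supplied by Lemmas \ref{Lemma:LG} and \ref{Lemma:OG}, via a natural change of variables, and then to deal with the finitely many small $n$ that fall outside the domain of the lemmas by direct numerical computation using Corollary \ref{Coro:eigenvalue}.

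For $LG(n)$, set $x=\tfrac{1}{n+1}$ so that $n+1=1/x$ and a short calculation gives $n^{2}+n+2=(2x^{2}-x+1)/x^{2}$, hence $d(n)=(2x^{2}-x+1)/(2x^{2})$. Using the formula $\delta_{0}(LG(n))=2^{-x}(n+1)/\sin\!\bigl(\tfrac{\pi x}{2}\bigr)$ from Corollary \ref{Coro:eigenvalue}, the desired inequality $\delta_{0}(LG(n))\ge d(n)$ is, after clearing denominators, exactly
\[
2x \;\ge\; 2^{x}(2x^{2}-x+1)\sin\tfrac{\pi x}{2},
\]
which is $f(x)\ge 0$. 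For $n\ge 2$ one has $x=1/(n+1)\le 1/3$, so Lemma \ref{Lemma:LG} applies and gives the inequality. The remaining case $n=1$ is handled by evaluating Corollary \ref{Coro:eigenvalue} directly: $\delta_{0}(LG(1))=2^{-1/2}\cdot 2\cdot(\sin\tfrac{\pi}{4})^{-1}=2=d(1)$, yielding the claimed equality.

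For $OG(n)$, the analogous substitution is $x=1/n$. Using $\delta_{0}(OG(n))=2^{x}n/\sin\!\bigl(\tfrac{\pi x}{2}\bigr)$ and the identity $d(n)=(2x^{2}+x+1)/(2x^{2})$, the inequality $\delta_{0}(OG(n))\ge d(n)$ becomes
\[
2^{x+1}x \;\ge\; (2x^{2}+x+1)\sin\tfrac{\pi x}{2},
\]
which is precisely $h(x)\ge 0$. Lemma \ref{Lemma:OG} covers $x\le 1/6$, i.e.\ $n\ge 6$. The values $n=1,2,3,4,5$ must be handled separately; I plan to plug them into Corollary \ref{Coro:eigenvalue} one at a time. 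The cases $n=1$ and $n=2$ yield $\delta_{0}=2=d(1)$ and $\delta_{0}=4=d(2)$ respectively (the announced equalities), while $n=3,4,5$ give strict inequalities from the elementary estimates $\sin(\pi/6)=1/2$, $\sin(\pi/8)<0.383$, $\sin(\pi/10)<0.310$.

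The only mild obstacle is the separation between equality and strict inequality: one must also check that, beyond the three stated cases, the inequality is strict. For $LG(n)$ with $n\ge 2$ and $OG(n)$ with $n\ge 6$ this is automatic because the intermediate bound $u$ (resp.\ $v$) in the proofs of the two lemmas vanishes only at $x=0$; for the handful of small-$n$ cases of $OG(n)$ left over, strictness is visible from the direct numerical estimates. Assembling these three ingredients (the change-of-variables reduction, the two polynomial lemmas, and a finite numerical check) completes the proof.
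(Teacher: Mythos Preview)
Your proposal is correct and follows essentially the same route as the paper: the substitution $x=\tfrac{1}{n+1}$ (resp.\ $x=\tfrac{1}{n}$) reduces $\delta_0\ge d(n)$ to $f(x)\ge 0$ (resp.\ $h(x)\ge 0$), Lemmas \ref{Lemma:LG} and \ref{Lemma:OG} handle $n\ge 2$ and $n\ge 6$ respectively, and the remaining small values of $n$ are checked directly from Corollary \ref{Coro:eigenvalue}. One very minor caveat in your extra strictness discussion: the auxiliary function $v$ from the proof of Lemma \ref{Lemma:OG} actually vanishes at $x=1/6$ as well as at $x=0$, so the strict inequality at $n=6$ comes instead from the strictness of the intermediate estimates ($2^{x+1}>2$ and $\sin(\pi x/2)<\pi x/2$ for $x>0$), not from $v$ alone.
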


\begin{proof}
Case of $\lg:$\\
For $n\geq 2$, the inequality $\delta_0\geq d(n)$ follows from the inequality in Lemma \ref{Lemma:LG} with $x=\frac{1}{n+1}$;
\begin{displaymath} f\big(\frac{1}{n+1}\big)\geq 0.\end{displaymath}
For $n=1,$ we check that the equality, not a strict inquality, holds; $$\delta_0=2=d(1).$$
Case of $\og:$\\
If $n\geq 6$, the inquality follows from the inequality in Lemma $\ref{Lemma:OG}$ with $x=\frac{1}{n}$; $$h\big(\frac{1}{n}\big)\geq 0.$$ The remaining cases can be checked by hands or a calculator as follows.
\begin{enumerate}
\item if $n=1,$ then $\delta_0=2=d(1)$,
\item if $n=2,$ then $\delta_0=4=d(2)$,
\item if $n=3,$ then $\delta_0\approx7.55\geq d(3)=7$,
\item if $n=4,$ then $\delta_0\approx 12.43\geq d(4)=11$,
\item if $n=5,$ then $\delta_0\approx26.02\geq d(5)=22.$
\end{enumerate}
\end{proof}

\subsection{For the equality in GLBC}
Note that  $OG(1)$ and $OG(2)$ are new examples for which the equality holds.
This can also be directly checked as follows.\\
For $M=OG(1)$,  fix the ordered basis $\mathcal{B}_1=\{\tau_0, \tau_1\}$ for $H^\star(M,\C)$. Then by Example \ref{Example3}, the matrix of $[c_1(M)]$ with respect $\mathcal{B}_1$ is given by
\be 
A_1=\left(\begin{array}{cc}
0&2\\
2&0
\end{array}\right).
\ee
Then the eigenvalues of $A_1$ are $2, -2,$ and hence  we have $$\delta_0=2= \mathrm{dim} \ OG(1)+1.$$

For $M=OG(2),$ using Example \ref{Example4}, the matrix of $[c_1(M)]$ with respect to the ordered basis $\mathcal{B}_2=\{\tau_0,\tau_1, \tau_2, \tau_{(2,1)}\}$ is given by
\be 
A_2=\left(\begin{array}{cccc}
0&0&0&4\\
4&0&0&0\\
0&4&0&0\\
0&0&4&0
\end{array}\right).
\ee
The eigenvalues of $A_2$ are $\pm 4, \pm 4 \sqrt{-1}.$ Thus in this case we have $$\delta_0=4=\mathrm{dim} \ OG(2)+1.$$

%\be 
%C=\left(\begin{array}{cccc}
%0&0&3&0\\
%3&0&0&3\\
%0&6&0&0\\
%0&0&3&0
%\end{array}\right)
%\ee

\end{document}